\documentclass[preprint,12pt]{elsarticle}

\usepackage{amssymb}
\usepackage{amsmath}
\usepackage{amsthm}

\usepackage{paralist}
\usepackage{color}
\usepackage{hyperref}

\theoremstyle{definition}
\newtheorem{theorem}{Theorem}[section]
\newtheorem{proposition}[theorem]{Proposition}

\newtheorem{example}[theorem]{Example}
\newtheorem{definition}[theorem]{Definition}
\newtheorem{lemma}[theorem]{Lemma}
\newtheorem{remark}[theorem]{Remark}
\newtheorem{definition-proposition}[theorem]{Definition-Proposition}

\newcommand{\rk}{\mathrm{rk}\,}
\newcommand{\llangle}{\langle\!\langle}
\newcommand{\rrangle}{\rangle\!\rangle}

\newcommand{\set}[1]{\left\lbrace #1\right\rbrace}

\newcommand{\simto}{\xrightarrow{\raisebox{-0.7ex}[0ex][0ex]{$\sim$}}} 

\begin{document}

\numberwithin{equation}{section}

\begin{frontmatter}



\title{Port-Hamiltonian formulation of energy-based modeling frameworks}


\author{Jonas Kirchhoff} 

\affiliation{organization={Institut für Mathematik, Martin-Luther-Universität Halle-Wittenberg},
            addressline={Theodor-Lieser-Stra\ss e 5}, 
            city={Halle (Saale)},
            postcode={06120}, 
            state={Sachsen-Anhalt},
            country={Germany}}

\begin{abstract}

The energy-based modelling framework proposed by Altmann and Schulze is demonstrated to have two distinct representations as port-Hamiltonian systems. The ambiguity is expressed in the r\^{o}le of the algebraic variable, and is distinct from the usual ambiguity in the geometric representation of coordinate representations of classical port-Hamiltonian systems. A straightforward extension to systems with feedthrough is given. 

\end{abstract}

\begin{keyword}
port-Hamiltonian systems \sep Dirac structures \sep Lagrangian submanifolds \sep energy-based modeling



\MSC[2020] 34A09 \sep 93C10

\end{keyword}

\end{frontmatter}




\section{Introduction}

Classes of systems based on some form of ``energy'', which are guided by balance equations and an underlying network structure, are very well-known. One of the most widely used of such system classes is the class of port-Hamiltonian systems. This system class is geometric in nature with the underlying geometric structure given by the so-called \textit{Dirac structures}, which may be viewed as models of lossless transport of energy between partial systems. These partial systems are grouped by their effect on the energy into conservative parts, dissipative parts and connective parts, where the system is open to interconnection. The conservative parts of the system are modelled by a Lagrangian submanifold in the cotangent bundle of the underlying manifold of ``states''\footnote{Here, manifold of ``states'' is to be taken with caution, since this could very well be the phase space of a mechanical system, where the second order differential equation becomes a first order system.}, and the dissipative part is modelled by a linear non-negative Lagrangian relation. The connective parts are external ports expressed in free variables in the Dirac structure. It should be stressed that the three system parts, or types of system parts, do not correspond to real-existing partial systems, e.g. is a real-existing spring virtually split into an ideal spring and an ideal damper.

Given several systems, the energy leaving (or entering) the systems via their external ports may be routed lossless through a Dirac structure, and the resulting interconnected system has again the structure of a port-Hamiltonian system. This structure-preserving interconnection, which allows for automated modeling, and the energy-based modeling are two key features of port-Hamiltonian systems. 

Recently, Altmann and Schulze proposed a class of energy-based models as extensions of port-Hamiltonian systems in~\cite{AltmSchu25} with further results on discretization and model reduction in~\cite{AltmKarsSchu26}. In this note, we demonstrate that this modelling class admits two different (geometric) representations as port-Hamiltonian systems. We extend the system class to include feedthrough.

\section{Reminder on port-Hamiltonian systems}

In this section, we recall the elementary definitions of Dirac structure, resistive Lagrangian subspace and Lagrangian submanifold, and we recall the interconnection. For simplicity of understanding, we restrict ourself to open sets and constant vector spaces. The extension to general (finite-dimensional, smooth) manifolds and (finite-dimensional, smooth) vector bundles is straightforward.

\begin{definition}[{cf.~\cite[Definition 1.1.1]{Cour90}}]
A \textit{Dirac structure} in a real, finite-dimensional vector space $X$ with dual space $X^*$ is a subvector space $D\leq X\oplus X^*$, which is isotropic and co-isotropic with respect to the indefinite inner product
\begin{align*}
\llangle\cdot,\cdot\rrangle: (X\oplus X^*)\otimes (X\oplus X^*) & \to \mathbb{R},\\
\llangle(f,e),(f',e')\rrangle & := \langle e,f'\rangle+\langle e',f\rangle,
\end{align*}
where $\langle\cdot,\cdot\rangle$ denotes the duality product.
\end{definition}

The standard example of Dirac structures are co-graphs $\set{(J\alpha,\alpha)}$ and graphs $\set{(X,\Omega X)}$ for antisymmetric linear maps $\Omega:X\to X^*$ and $J:X^*\to X$. It is well-known that, due to the non-degeneracy of $\llangle\cdot,\cdot\rrangle$, the co-isotropy of isotropic subspaces $D$ is characterised by $\dim D = \dim V$.

\begin{definition}[{cf.~\cite[Definitions I.6.4]{LibeMarl87}}]
A \textit{Lagrangian subspace} in a real, finite-dimensional vector space $X$ with dual space $X^*$ is a subvector space $L\leq X\oplus X^*$, which is isotropic and co-isotropic with respect to the canonical symplectic form
\begin{align*}
\Omega_{\mathrm{can}}: (X\oplus X^*)\otimes (X\oplus X^*) & \to \mathbb{R},\\
\Omega_{\mathrm{can}}\big((f,e),(f',e')\big) & := \langle e,f'\rangle-\langle e',f\rangle.
\end{align*}
The subspace $L$ is \textit{non-negative}, if $\langle e,f\rangle\geq 0$ for all $(f,e)\in L$.
\end{definition}

The standard example of Lagrangian subspaces are graphs $\set{(P\alpha,\alpha)}$ and $\set{(X,RX)}$ for symmetric linear maps $R:X\to X^*$ and $P:X^*\to X$, which are non-negative if, and only if, $P$ and $R$ are positive semi-definite. 

The last geometric ingredient of port-Hamiltonian systems that we recall here are the Lagrangian submanifold in the cotangent bundle of an open set, which is trivialised. 

\begin{definition}
A \textit{Lagrangian submanifold} in an open set $U\subseteq X$, $X$ a finite-dimensional real vector space, is a \textit{submanifold\footnote{cf.~\cite[Definition 1.5.3]{Conlon93}}} $\mathcal{L}\subseteq U\times X^*$ so that each fibre of its tangent bundle is a Lagrangian submanifold.
\end{definition}

Equivalently, a Lagrangian submanifold is characterised as a submanifold $\mathcal{L}$ with $\dim\mathcal{L} = \dim X$ so that for any two smooth curves $(x,\alpha),(x',\alpha'):(-\varepsilon,\varepsilon)\to \mathcal{L}$ with $(x,\alpha)(0) = (x',\alpha')(0)$, 
\begin{align*}
\langle\tfrac{\mathrm{d}}{\mathrm{d}t}\alpha'\vert_{t = 0},\tfrac{\mathrm{d}}{\mathrm{d}t}x\vert_{t = 0}\rangle = \langle\tfrac{\mathrm{d}}{\mathrm{d}t}\alpha\vert_{t = 0},\tfrac{\mathrm{d}}{\mathrm{d}t}x'\vert_{t = 0}\rangle.
\end{align*}
With these, ingredients, we may give the geometric definition of the port-Hamiltonian systems.

\begin{definition}[{cf.~\cite{MascScha20}}]
A \textit{port-Hamiltonian system} is the dynamical system
\begin{equation}\label{eq:pH_orig}
\begin{aligned}
\big(\tfrac{\mathrm{d}}{\mathrm{d}t}x,f_r,-y,e,e_r,u\big) & \in\mathcal{D}\\
(f_r,e_r) & \in L_r\\
(x,e) & \in\mathcal{L}
\end{aligned}
\end{equation}
with
\begin{itemize}
\item finite-dimensional, real vector spaces $X,X_r,X_p$
\item smooth solution curves $x\in\mathcal{C}^\infty\big((t_0,t_1),U\big)$, $U\subseteq X$ open,
\item Dirac structure $D\leq (X\oplus X_r\oplus X_p)\oplus(X^*\oplus X_r^*\oplus X_p^*)$,
\item non-negative Lagrangian subspace $L_r\leq X_r\oplus X_r^*$
\item Lagrangian submanifold $\mathcal{L}\subseteq U\times X^*$
\end{itemize}
\end{definition}

Choose now a fixed set of coordinates for each of the spaces $X,X_r,X_p$, and consider the case that $\mathcal{L} = \set{\big(x,\partial_{x}H(x)\big)}$ for a smooth function $H$, which is a Lagrangian submanifold by~\cite[Proposition III.2.4]{LibeMarl87}. Then, the image (basis) and kernel (co-basis) representations of Lagrangian and Dirac subspaces yields that port-Hamiltonian systems~\eqref{eq:pH_orig} may be written as
\begin{align*}
E\begin{pmatrix}
\tfrac{\mathrm{d}}{\mathrm{d}t}x\\
F_rz\\
-y
\end{pmatrix} = F\begin{pmatrix}
\partial_x H(x)\\
E_rz\\
u
\end{pmatrix}
\end{align*}
with quadratic matrices $E,F,F_r,E_r$ of appropriate dimension satisfying the structural conditions
\begin{align*}
EF^\top+FE^\top = 0,\quad E_r^\top F_r - F_r^\top E_r = 0, \quad E_r^\top F_r\geq 0,
\end{align*}
and $\rk[E,F] = \dim (X\oplus X_r\oplus X_p)$ and $\rk[F_r^\top,E_r^\top] = \dim X_r$. In that case, the port-Hamiltonian systems are said to have an \textit{explicitly defined energy}, otherwise, the energy of the system is \textit{implicitly defined}.

\section{System class under consideration}

In~\cite{AltmSchu25}, the authors propose a ``novel energy-based modeling framework'', which, just as port-Hamiltonian systems, admits energy balance equations, structure-preserving interconnections and discretizations. The system class under consideration are written in coordinates as
\begin{equation}\label{eq:desired_systems}
\begin{aligned}
\begin{pmatrix}
\partial_{z_1}H(z_1,z_2)\\
\tfrac{\mathrm{d}}{\mathrm{d}t}z_2\\0
\end{pmatrix} & = (J-R)\begin{pmatrix}
\tfrac{\mathrm{d}}{\mathrm{d}t}z_1\\\partial_{z_2}H(z_1,z_2)\\z_3
\end{pmatrix} + Bu\\
y & = B^\top \begin{pmatrix}
\tfrac{\mathrm{d}}{\mathrm{d}t}z_1\\\partial_{z_2}H(z_1,z_2)\\z_3
\end{pmatrix}
\end{aligned}
\end{equation}
with 
\begin{itemize}
\item solution curves $(z_1,z_2,z_3)\in\mathcal{C}^\infty\big((t_0,t_1),\mathbb{R}^{n_1+n_2+n_3})$
\item Hamiltonian function $H\in\mathcal{C}^\infty(\mathbb{R}^{n_1+n_2})$
\item matrices $J,R$ and $B$ of appropriate dimensions satisfying the structural conditions
\begin{align*}
J+J^\top = R-R^\top = 0,\qquad R\geq 0.
\end{align*}
\end{itemize}
For ease of readability, we shall omit the argument of $\partial_{z_i}H(z_1,z_2)$ and write $\partial_{z_i}H$ whenever the argument is unambiguous. Moreover, we shall consider the case that $H\in\mathcal{C}^\infty(U)$ for some open $U\subseteq\mathbb{R}^{n_1+n_2}$. In case that $n_3 = 0$, the energy-stable port-Hamiltonian systems without feedthrough, which are introduced by~\cite{BuchGlasZwar26}, emerge from~\eqref{eq:desired_systems}. Their port-Hamiltonian nature is seen as follows.

\begin{proposition}\label{prop:es_pH}
Let $n_1,n_2,m\in\mathbb{N}_0$ and let $U\subseteq\mathbb{R}^{n_1+n_2}$ be an open and non-empty set. Consider matrices $J,R\in\mathbb{R}^{(n_1+n_2)\times(n_1+n_2)}$ and $B\in\mathbb{R}^{(n_1+n_2)\times m}$ so that $J$ is antisymmetric and $R$ is symmetric and positive semi-definite, and consider $H\in\mathcal{C}^\infty(U)$. The dynamical system
\begin{equation}\label{eb_1}
\begin{aligned}
\begin{pmatrix}
\partial_{z_1}H\\
\tfrac{\mathrm{d}}{\mathrm{d}t}z_2
\end{pmatrix} & = (J-R)\begin{pmatrix}
\tfrac{\mathrm{d}}{\mathrm{d}t}z_1\\\partial_{z_2}H
\end{pmatrix} + Bu\\
y & = B^\top \begin{pmatrix}
\tfrac{\mathrm{d}}{\mathrm{d}t}z_1\\\partial_{z_2}H(z_1,z_2)
\end{pmatrix}
\end{aligned}
\end{equation}
is equivalent to a port-Hamiltonian system.
\end{proposition}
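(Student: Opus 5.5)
The idea is to move $\tfrac{\mathrm{d}}{\mathrm{d}t}z_1$ and $\partial_{z_1}H$ into swapped roles, so that the system becomes a standard port-Hamiltonian system \eqref{eq:pH_orig} in the state $x=(z_1,z_2)$ with the explicit Lagrangian submanifold $\mathcal{L}=\set{(x,\partial_xH(x)) : x\in U}$. The mismatch in \eqref{eb_1} is that $\tfrac{\mathrm{d}}{\mathrm{d}t}z_1$ appears on the ``effort'' side and $\partial_{z_1}H$ on the ``flow'' side. I would therefore exhibit a Dirac structure $\mathcal{D}$ and a non-negative Lagrangian subspace $L_r$ such that, after collecting variables as flows $f=(\tfrac{\mathrm{d}}{\mathrm{d}t}z_1,\tfrac{\mathrm{d}}{\mathrm{d}t}z_2)$ and efforts $e=(\partial_{z_1}H,\partial_{z_2}H)$, the relations \eqref{eb_1} are exactly $(f,f_r,-y,e,e_r,u)\in\mathcal{D}$ and $(f_r,e_r)\in L_r$.

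\textbf{Construction.} Write $w=(\tfrac{\mathrm{d}}{\mathrm{d}t}z_1,\partial_{z_2}H)$ and $v=(\partial_{z_1}H,\tfrac{\mathrm{d}}{\mathrm{d}t}z_2)$. Then \eqref{eb_1} reads $v=Jw-Rw+Bu$ and $y=B^\top w$. I would introduce a resistive port with $X_r=\mathbb{R}^{n_1+n_2}$, flows $f_r$, efforts $e_r$, and $L_r=\set{(f_r,e_r): e_r=Rf_r}$. This $L_r$ is non-negative Lagrangian, by the standard example after the definition of Lagrangian subspaces. Next I would define $\mathcal{D}$ as the set of tuples satisfying
\begin{align*}
v = Jw - e_r + Bu,\qquad f_r = w,\qquad y = B^\top w .
\end{align*}
Here $w$ and $v$ are regarded as a ``partially swapped'' pair of the true flow/effort pair $(f,e)$.

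\textbf{Verification.} I would check isotropy of $\mathcal{D}$ with respect to $\llangle\cdot,\cdot\rrangle$. This means showing that $\langle e,f\rangle+\langle e_r,f_r\rangle-\langle u,y\rangle$ vanishes on $\mathcal{D}$ and then polarizing. Since $\langle e,f\rangle=\langle\partial_{z_1}H,\tfrac{\mathrm{d}}{\mathrm{d}t}z_1\rangle+\langle\partial_{z_2}H,\tfrac{\mathrm{d}}{\mathrm{d}t}z_2\rangle=\langle v,w\rangle$, the swap is harmless for the pairing. Then $\langle v,w\rangle=\langle Jw,w\rangle-\langle e_r,w\rangle+\langle Bu,w\rangle=-\langle e_r,f_r\rangle+\langle u,y\rangle$, using the antisymmetry of $J$. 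Polarization goes through because the defining relations are linear and the bilinear form $(v,w)\mapsto\langle v,w'\rangle+\langle v',w\rangle$ is invariant under the coordinate swap. For maximality I would count dimensions. The free parameters are $w$, $e_r$ and $u$, giving $(n_1+n_2)+(n_1+n_2)+m=\dim(X\oplus X_r\oplus X_p)$, with all remaining variables determined linearly. Substituting $e_r=Rf_r=Rw$ then recovers \eqref{eb_1}, and conversely, so solutions correspond one-to-one.

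\textbf{Main obstacle.} The delicate point is making sure the swapped object really is a Dirac structure in $X\oplus X^*$ and not merely a Lagrangian-type relation, since a swap can flip signs in the symmetric pairing. I expect this to work because the pairing $\langle e,f\rangle$ involves each pair $(\partial_{z_i}H,\tfrac{\mathrm{d}}{\mathrm{d}t}z_i)$ symmetrically, so swapping within the pair preserves $\llangle\cdot,\cdot\rrangle$. If an unexpected sign issue arises in the polarization, I would instead place the $z_1$-component into an extra port, or absorb it into the Lagrangian submanifold via the map $(z_1,\alpha_1)\mapsto(\alpha_1,-z_1)$. I would check the polarization carefully on general pairs of elements of $\mathcal{D}$ rather than relying on the quadratic form alone.
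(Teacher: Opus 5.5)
Your proposal is correct and is essentially the paper's proof. The paper likewise takes $\mathcal{D}$ to be the graph of the skew-symmetric matrix $\begin{bmatrix} J & I & B\\ -I & 0 & 0\\ -B^\top & 0 & 0\end{bmatrix}$, mapping the partially swapped variables $\big((X_1,\alpha_2),e,u\big)$ to $\big((\alpha_1,X_2),f,y\big)$. It also checks isotropy using the invariance of the pairing under that swap, counts dimensions, and closes with the resistive relation $e=Rf$; your sign convention ($f_r=w$ with $-e_r$, versus the paper's $f=-w$ with $+e$) is immaterial, and the fallback in your ``main obstacle'' paragraph is unnecessary, because vanishing of the quadratic form on a linear subspace already yields isotropy by polarization of the symmetric form $\llangle\cdot,\cdot\rrangle$.
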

\begin{proof}
Define the $2(n_1+n_2)+m$-dimensional subspace $\mathcal{D}$ by the equation
\begin{align*}
\begin{pmatrix}
\begin{pmatrix}\alpha_1\\X_2\end{pmatrix}\\f\\y
\end{pmatrix} = \begin{bmatrix}
J & I & B\\
-I & 0 & 0\\
-B^\top & 0 & 0
\end{bmatrix}\begin{pmatrix}
\begin{pmatrix}X_1\\\alpha_2\end{pmatrix}\\e\\u
\end{pmatrix}.
\end{align*}
Then, for all $(X_1,X_2,f,y,\alpha_1,\alpha_2,e,u)\in\mathcal{D}$
\begin{align*}
\begin{pmatrix}
\alpha_1\\\alpha_2\\e\\u
\end{pmatrix}^\top\begin{pmatrix}
X_1\\X_2\\f\\y
\end{pmatrix} = \begin{pmatrix}
\begin{pmatrix}X_1\\\alpha_2\end{pmatrix}\\e\\u
\end{pmatrix}^\top\begin{pmatrix}
\begin{pmatrix}\alpha_1\\X_2\end{pmatrix}\\f\\y
\end{pmatrix} = 0
\end{align*}
and therefore $\mathcal{D}$ is recognised as a constant Dirac structure. Define further the subspace
\begin{align*}
\mathcal{R} := \set{(f,e)\,\big\vert\,e = Rf, f\in\mathbb{R}^{n_1+n_2}},
\end{align*}
which is non-negative by non-negativity of $R$ and Lagrangian by symmetry of $R$. Then, the port-Hamiltonian system
\begin{align*}
\big(\tfrac{\mathrm{d}}{\mathrm{d}t}z,f,-y,\nabla H(z),e,u\big)\in\mathcal{D},\qquad (f,e)\in\mathcal{R}
\end{align*}
is equivalently described by the differential-algebraic equation
\begin{align*}
\begin{pmatrix}
\partial_{z_1}H(z_1,z_2)\\
\tfrac{\mathrm{d}}{\mathrm{d}t}z_2
\end{pmatrix} & = J\begin{pmatrix}
\tfrac{\mathrm{d}}{\mathrm{d}t}z_1\\
\partial_{z_2}H(z_1,z_2)
\end{pmatrix} + e + Bu\\
f & = -\begin{pmatrix}
\tfrac{\mathrm{d}}{\mathrm{d}t}z_1\\
\partial_{z_2}H(z_1,z_2)
\end{pmatrix}\\
y & = B^\top \begin{pmatrix}
\tfrac{\mathrm{d}}{\mathrm{d}t}z_1\\
\partial_{z_2}H(z_1,z_2)
\end{pmatrix}\\
e & = Rf
\end{align*}
from which the latent variables $e$ and $f$ are removed to arrive at~\eqref{eb_1}.
\end{proof}

\begin{remark}\label{rem:non_uniqueness}
Of course, the presented representation of~\eqref{eb_1} as port-Hamil\-tonian system is not unique with the usual three sources of ambiguity:
\begin{itemize}
\item The Dirac structure is unknown outside of $\mathrm{graph}\,\nabla H$. This ambiguity is removed when we do not look at the single equation~\eqref{eb_1}, but instead leave $H$ itself free.
\item From the coordinate representation alone, it is unknown whether $u$ (and, by virtue of duality, $y$) is purely an element of $(\mathbb{R}^m)^*$ or an element of $\mathbb{R}^m$ or a mixed element; likewise for $e_r$ and $f_r$. This ambiguity is removed, when we consider uniqueness up to a splitting-preserving unitary automorphism $(\varphi\oplus\varphi^*)\oplus \psi_1\oplus\psi_2$, where $\varphi:\mathbb{R}^m\simto\mathbb{R}^m$, $\psi_1:\mathbb{R}^{n_1+n_2}\oplus\big(\mathbb{R}^{n_1+n_2}\big)^*\simto \mathbb{R}^{n_1+n_2}\oplus\big(\mathbb{R}^{n_1+n_2}\big)^*$ and $\psi_2:\mathbb{R}^m\oplus\big(\mathbb{R}^m\big)^*\simto \mathbb{R}^m\oplus\big(\mathbb{R}^m\big)^*$.
\item The third ambiguity is the dimension of the latent variables $f_r$ and $e_r$: If $A\in\mathbb{R}^{n_r\times (n_1+n_2)}$ has full column rank, then using the skew-symmetric matrix
\begin{align*}
\begin{bmatrix}
J & A^\top & B\\
-A & 0 & 0\\
-B^\top & 0 & 0
\end{bmatrix}
\end{align*}
in the Dirac structure and the resistive structure governed by $e = A(A^\top A)^{-1}R(A^\top A)^{-1}A^\top f$ gives likewise~\eqref{eb_1}. This ambiguity can be removed when we treat, similarly to the Hamiltonian function, the matrix $R$ as a parameter of the system so that we are interested in the Dirac structure up to unitary splitting-preserving automorphism with minimal dimension in the latent variables, which gives the port-Hamiltonian formulation of~\eqref{eb_1} for all $H$ and $R$.
\end{itemize}
\end{remark}

\section{Two distinct port-Hamiltonian formulations}

In this section, we present two distinct port-Hamiltonian formulations of the systems~\eqref{eq:desired_systems}. The distinction between the formulations lies in the r\^{o}le of the variable $z_3$. Either, $z_3$ may be interpreted as a free covector in a Lagrangian submanifold, when the zero in the equation corresponds to a constant auxiliary state, or the pair of $z_3$ and $0$ may be viewed as a trivial resistive port. 

\begin{proposition}\label{prop:implicit_representation}
The dynamical system~\eqref{eq:desired_systems}
is equivalent to a port-Hamil\-tonian with implicitly defined energy so that $z_3$ is a free co-vector.
\end{proposition}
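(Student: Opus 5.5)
The plan is to reuse the construction from the proof of Proposition~\ref{prop:es_pH}, enlarged by the block belonging to $z_3$. The zero on the left-hand side of~\eqref{eq:desired_systems} will be read as the time derivative of a constant auxiliary state $x_3$. The variable $z_3$ will be the co-vector paired with $x_3$ in a Lagrangian submanifold that is not a gradient graph, which is exactly what makes the energy implicitly defined.

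\textbf{Step 1: the Lagrangian submanifold.} Set $X=\mathbb{R}^{n_1+n_2+n_3}$ with coordinates $x=(x_1,x_2,x_3)$ and take
\begin{align*}
\mathcal{L}:=\set{\big((x_1,x_2,x_3),(\partial_{z_1}H(x_1,x_2),\partial_{z_2}H(x_1,x_2),\alpha_3)\big)\,\big\vert\,(x_1,x_2)\in U,\ x_3=0,\ \alpha_3\in(\mathbb{R}^{n_3})^*}.
\end{align*}
This is the product of $\mathrm{graph}\,\nabla H\subseteq U\times(\mathbb{R}^{n_1+n_2})^*$, which is Lagrangian by~\cite[Proposition III.2.4]{LibeMarl87}, with the conormal fibre $\set{0}\times(\mathbb{R}^{n_3})^*$, which is Lagrangian in $\mathbb{R}^{n_3}\oplus(\mathbb{R}^{n_3})^*$. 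It has dimension $n_1+n_2+n_3$, and a product of Lagrangian submanifolds is Lagrangian for the product symplectic form. One can check this directly with the curve criterion stated after the definition. For $n_3>0$, $\mathcal{L}$ is not the graph of a gradient, so the energy is implicitly defined. Because $\alpha_3$ ranges freely over the fibre, $z_3$ becomes a free co-vector.

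\textbf{Step 2: the Dirac structure and the resistive relation.} The Dirac structure $\mathcal{D}$ on $(X\oplus\mathbb{R}^{n_1+n_2+n_3}\oplus\mathbb{R}^m)\oplus(\text{duals})$ is defined by the same skew-symmetric block equation as in Proposition~\ref{prop:es_pH}. The only change is that the first block row is replaced by
\begin{align*}
\begin{pmatrix}\alpha_1\\X_2\\X_3\end{pmatrix} = J\begin{pmatrix}X_1\\\alpha_2\\\alpha_3\end{pmatrix}+e+Bu,
\end{align*}
together with $f=-(X_1,\alpha_2,\alpha_3)$ and $y=-B^\top(\cdots)$. Here the pair $(X_3,\alpha_3)$ is moved into the same side-swapped position that $(X_2,\alpha_2)$ already occupies. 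Isotropy then follows by the identical computation: the pairing equals $v^\top M v$ for a skew matrix $M$, hence vanishes. The dimension count $\dim\mathcal{D}=\dim(X\oplus\mathbb{R}^{n_1+n_2+n_3}\oplus\mathbb{R}^m)$ gives co-isotropy. As resistive structure take $\mathcal{R}=\set{(f,Rf)}$, exactly as before.

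\textbf{Step 3: equivalence.} Substituting $(X,\alpha)=(\tfrac{\mathrm d}{\mathrm dt}x,\alpha)$ with $(x,\alpha)\in\mathcal{L}$ gives $x_3\equiv0$, hence $X_3=0$, and we identify $z_3=\alpha_3$, $z_{1,2}=x_{1,2}$. Eliminating the latent variables $e,f$ then reproduces~\eqref{eq:desired_systems} line by line. For the converse, given a solution of~\eqref{eq:desired_systems}, set $x_3=0$ and $\alpha_3=z_3$ to obtain a solution of the port-Hamiltonian system.

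\textbf{Main obstacle.} The delicate point is the correct placement of the $z_3$ block. One must make sure that $0=\tfrac{\mathrm d}{\mathrm dt}x_3$ is indeed forced by $\mathcal{L}$, and does not become an extra independent constraint that would break the dimension count of $\mathcal{D}$. One must also check that the sign conventions, including $-y$ in~\eqref{eq:pH_orig}, still make the block matrix skew. I would first verify that the sign bookkeeping is consistent with the $n_3=0$ case already proved, and then add the $n_3$ block.
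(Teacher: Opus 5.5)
Your proposal is correct and follows essentially the same route as the paper: a constant auxiliary state, with $z_3$ as the free fibre coordinate of the product Lagrangian submanifold $\mathrm{graph}\,\nabla H\times\mathcal{T}^*_{a_0}V$; the Dirac structure of Proposition~\ref{prop:es_pH} enlarged by placing $(X_3,\alpha_3)$ in the same swapped position as $(X_2,\alpha_2)$; the same resistive relation $e=Rf$; and elimination of the latent variables. The only cosmetic difference is that you fix $a_0=0$ and $V=\mathbb{R}^{n_3}$, whereas the paper allows an arbitrary open $V$ and any $a_0\in V$.
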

\begin{proof}
Choose any nonempty open $V\subseteq\mathbb{R}^n$ and $a_0\in V$. On the extended space $\overline{U} := U\times V$, define
\begin{align*}
\mathcal{L} := \set{(z,a_0,\nabla H(z),z_3)\,\big\vert\,z\in U, z_3\in V}.
\end{align*}
Similarly to the graph of the derivative of a function it is verified that $\mathcal{L}$ is Lagrangian. Clearly, $\mathcal{L}$ is a submanifold of $\overline{U}\times\mathbb{R}^{n_1+n_2+n_3}$ with dimension $\dim\mathcal{L} = n_1+n_2+n_3$. Therefore, it remains to verify that the fibres of the tangent bundle $\mathcal{T}\mathcal{L}$ are isotropic. By chain rule, every tangent vector to $\mathcal{L}$ has the form
\begin{align*}
(X,0,\nabla^2 H(z)X,Y)\in\mathcal{T}_{(z,a_0,\nabla H(z),z_3)}\mathcal{L}.
\end{align*}
Therefore, the lemma of H.A.~Schwarz implies
\begin{align*}
& \begin{pmatrix}
X\\
0\\
\nabla^2 H(z)X\\
Y
\end{pmatrix}^\top\begin{pmatrix}
0 & 0 & -I_{n_1+n_2} & 0\\
0 & 0 & 0 & -I_{n_3}\\
I_{n_1+n_2} & 0 & 0 & 0\\
0 & I_{n_3} & 0 & 0\\
\end{pmatrix}\begin{pmatrix}
X'\\
0\\
\nabla^2 H(z)X'\\
Y'
\end{pmatrix}\\
&\hspace{1cm}  = X^\top\nabla^2 H(z)X' + 0^\top Y' - (X')^\top \nabla^2 H(z)X - Y^\top 0 = 0
\end{align*}
and hence $\mathcal{L}$ is indeed a Lagrangian submanifold. Alternatively, $\mathcal{L} \simeq \mathrm{graph}\,\nabla H\times\mathcal{T}^*_{a_0}V$ is (up to reordering of coordinates) the product of Lagrangian submanifolds and therefore a Lagrangian submanifold. With the Dirac structure $\mathcal{D}$ characterised by
\begin{equation*}
\begin{aligned}
\begin{pmatrix}
\begin{pmatrix}\alpha_1\\X_2\\X_3\end{pmatrix}\\f\\y
\end{pmatrix} = \begin{bmatrix}
J & I & B\\
-I & 0 & 0\\
-B^\top & 0 & 0
\end{bmatrix}\begin{pmatrix}
\begin{pmatrix}X_1\\\alpha_2\\\alpha_3\end{pmatrix}\\e\\u
\end{pmatrix}
\end{aligned}
\end{equation*}
for all $(X_1,X_2,X_3,f,y,\alpha_1,\alpha_2,\alpha_3,e,u)\in\mathcal{D}$
and the Lagrangian non-negative $\mathcal{R} := \set{(f,e)\,\big\vert\,e = Rf, f\in\mathbb{R}^{n_1+n_2+n_3}}$, the port-Hamiltonian system
\begin{equation}\label{eq:pH_1}
\begin{aligned}
\big(\tfrac{\mathrm{d}}{\mathrm{d}t}(z_1,z_2,a),f_r,-y,(e_1,e_2,z_3),e_r,u\big) & \in\mathcal{D},\\
(f_r,e_r) & \in\mathcal{R},\\
\big((z_1,z_2,a),(e_1,e_2,z_3)\big)& \in\mathcal{L}
\end{aligned}
\end{equation}
is, after elimination of the latent variables $e_r$ and $f_r$, which are uniquely determined by $\big(\tfrac{\mathrm{d}}{\mathrm{d}t}z_1,\partial_{z_2}H,z_3\big)$, equivalent to
\begin{align*}
\begin{pmatrix}
\partial_{z_1}H\\
\tfrac{\mathrm{d}}{\mathrm{d}t}z_2\\0
\end{pmatrix} & = (J-R)\begin{pmatrix}
\tfrac{\mathrm{d}}{\mathrm{d}t}z_1\\\partial_{z_2}H\\z_3
\end{pmatrix} + Bu\\
y & = B^\top \begin{pmatrix}
\tfrac{\mathrm{d}}{\mathrm{d}t}z_1\\\partial_{z_2}H(z_1,z_2)\\z_3
\end{pmatrix}\\
a & = a_0.
\end{align*}
The trivial equation $a = a_0$ may be removed, so that the system~\eqref{eq:pH_1} is recognised as equivalent to~\eqref{eq:desired_systems}.
\end{proof}

The implicit representation that we have chosen above treats $z_3$ as a free co-vector in the fibre $\mathcal{T}_{a_0}^*V$. The coordinates allow also for $z_3$ to be a free vector as follows.

\begin{proposition}\label{prop:semi-explicit_representation}
There exists a port-Hamiltonian system $\Sigma_{\mathrm{pH}}$ with explicitly defined energy on the extended state space $\overline{U} = U\times\mathbb{R}^{n_3}$ so that each solution $(z_1,z_2,z_3,u,y)$ of~\eqref{eq:desired_systems} is given as $(z_1,z_2,\tfrac{\mathrm{d}}{\mathrm{d}t}\zeta,u,y)$ for a solution $(z_1,z_2,\zeta,u,y)$ of $\Sigma_{\mathrm{pH}}$. 
\end{proposition}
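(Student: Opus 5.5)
We will introduce a new state variable $\zeta$ with $\tfrac{\mathrm{d}}{\mathrm{d}t}\zeta = z_3$, so that $z_3$ becomes a velocity (a free vector) instead of a co-vector. On $\overline{U} = U\times\mathbb{R}^{n_3}$ we take the extended Hamiltonian $\overline{H}(z_1,z_2,\zeta) := H(z_1,z_2)$. It is independent of $\zeta$, so $\partial_\zeta\overline{H} = 0$. This supplies exactly the zero on the left-hand side of the third block row of~\eqref{eq:desired_systems}. The energy is then explicitly defined: the Lagrangian submanifold is $\mathrm{graph}\,\nabla\overline{H}$, which is Lagrangian by~\cite[Proposition III.2.4]{LibeMarl87}.

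The plan is to rewrite~\eqref{eq:desired_systems} with $\zeta$ in the form~\eqref{eb_1}. The pair $(z_1,\zeta)$ plays the role of the old $z_1$ (the variables whose derivatives appear on the right), and $z_2$ plays the role of the old $z_2$. After reordering, the system reads
\begin{align*}
\begin{pmatrix}
\partial_{z_1}\overline{H}\\
\partial_{\zeta}\overline{H}\\
\tfrac{\mathrm{d}}{\mathrm{d}t}z_2
\end{pmatrix} = P(J-R)P^\top\begin{pmatrix}
\tfrac{\mathrm{d}}{\mathrm{d}t}z_1\\
\tfrac{\mathrm{d}}{\mathrm{d}t}\zeta\\
\partial_{z_2}\overline{H}
\end{pmatrix} + PBu,
\qquad
y = (PB)^\top\begin{pmatrix}
\tfrac{\mathrm{d}}{\mathrm{d}t}z_1\\
\tfrac{\mathrm{d}}{\mathrm{d}t}\zeta\\
\partial_{z_2}\overline{H}
\end{pmatrix},
\end{align*}
where $P$ is the permutation matrix swapping the second and third blocks. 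The matrix $PJP^\top$ stays antisymmetric, and $PRP^\top$ stays symmetric and positive semi-definite. Hence this system is exactly of the form~\eqref{eb_1}, with $n_1$ replaced by $n_1+n_3$, state $(z_1,\zeta,z_2)$ and Hamiltonian $\overline{H}$ on the open set $\overline{U}$ (up to reordering coordinates).

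Proposition~\ref{prop:es_pH} then yields a port-Hamiltonian system $\Sigma_{\mathrm{pH}}$ equivalent to this system. Its Dirac structure is the one constructed in that proof with $PJP^\top$ and $PBP$-blocks. Its resistive structure is $e=PRP^\top f$. Its Lagrangian submanifold is $\mathrm{graph}\,\nabla\overline{H}$, so the energy is explicitly defined.

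It remains to relate solutions. Let $(z_1,z_2,z_3,u,y)$ solve~\eqref{eq:desired_systems} on $(t_0,t_1)$. Fix $\tau\in(t_0,t_1)$ and define $\zeta(t) := \int_\tau^t z_3(s)\,\mathrm{d}s$. Then $\zeta$ is smooth and $\tfrac{\mathrm{d}}{\mathrm{d}t}\zeta = z_3$. Substituting and using $\partial_\zeta\overline{H}=0$ and $\partial_{z_i}\overline{H}=\partial_{z_i}H$ shows that $(z_1,z_2,\zeta,u,y)$ solves the rewritten system, and hence $\Sigma_{\mathrm{pH}}$. The state $(z_1,z_2,\zeta)$ lies in $\overline{U}$ because $(z_1,z_2)\in U$ and $\zeta\in\mathbb{R}^{n_3}$ is unconstrained. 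This is precisely why the second factor of $\overline{U}$ is taken to be all of $\mathbb{R}^{n_3}$.

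The main obstacle is bookkeeping rather than analysis. One has to check that the block permutation is compatible with the splitting used in Proposition~\ref{prop:es_pH}, i.e.\ that $\zeta$ enters as a flow variable $X_1$-type component with effort $\partial_\zeta\overline{H}$. One also has to check that $J-R$ is conjugated by the same orthogonal $P$ on both sides, so that the structural conditions are preserved. The statement asks only for one direction (every solution of~\eqref{eq:desired_systems} lifts), so no claim about the converse or the non-uniqueness of $\zeta$ up to a constant is needed.
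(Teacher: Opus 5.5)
Your proposal is correct and follows the paper's proof essentially verbatim. You extend $H$ trivially in $\zeta$, use the block permutation (your $P$ is the paper's $T$) to place $(z_1,\zeta)$ in the $z_1$-slot, and apply Proposition~\ref{prop:es_pH} with $n_1$ replaced by $n_1+n_3$. Your explicit lift $\zeta(t)=\int_\tau^t z_3(s)\,\mathrm{d}s$ just spells out the projection property that the paper leaves implicit; also, the Dirac-structure block you call $PBP$ should read $PB$.
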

\begin{proof}
Put $\widetilde{U} := \set{(z_1,\zeta,z_2)\,\big\vert\,(z_1,z_2)\in U, \zeta\in\mathbb{R}^{n_3}}$ and put $\widetilde{H}$ as the pullback of $H$ to $\widetilde{U}$ under the canonical projection. In Proposition~\ref{prop:es_pH}, we demonstrated that
\begin{align*}
\begin{pmatrix}
\partial_{(z_1,\zeta)}\widetilde{H}\\\tfrac{\mathrm{d}}{\mathrm{d}t}z_2
\end{pmatrix} & = (\widetilde{J}-\widetilde{R})\begin{pmatrix}
\tfrac{\mathrm{d}}{\mathrm{d}t}\begin{pmatrix}
z_1\\\zeta
\end{pmatrix}\\
\partial_{z_2}\widetilde{H}
\end{pmatrix} + \widetilde{B}u,\\
y & = \widetilde{B}^\top \begin{pmatrix}
\tfrac{\mathrm{d}}{\mathrm{d}t}\begin{pmatrix}
z_1\\\zeta
\end{pmatrix}\\
\partial_{z_2}\widetilde{H}
\end{pmatrix},
\end{align*}
where
\begin{align*}
\widetilde{J} = TJT^\top,\ \widetilde{R} = TRT^\top,\ \widetilde{B} = TB,\ T = \begin{bmatrix}
I_{n_1} & 0 & 0\\
0 & 0 & I_{n_3}\\
0 & I_{n_2} & 0
\end{bmatrix}.
\end{align*}
Applying the unitary transformation $(T^\top \oplus T)\oplus (T^\top \oplus T)\oplus I$ to the Dirac structure, and $T^\top\oplus T$ to the resistive structure of that system as constructed in the proof of Proposition~\ref{prop:es_pH} gives that the system
\begin{equation*}
\begin{aligned}
\begin{pmatrix}
\partial_{z_1}H\\
\tfrac{\mathrm{d}}{\mathrm{d}t}z_2\\0
\end{pmatrix} & = (J-R)\begin{pmatrix}
\tfrac{\mathrm{d}}{\mathrm{d}t}z_1\\\partial_{z_2}H\\\tfrac{\mathrm{d}}{\mathrm{d}t}\zeta
\end{pmatrix} + Bu\\
y & = B^\top \begin{pmatrix}
\tfrac{\mathrm{d}}{\mathrm{d}t}z_1\\\partial_{z_2}H\\z_3
\end{pmatrix}
\end{aligned}
\end{equation*}
is equivalent to a port-Hamiltonian system, whose solution satisfy the proposed projection property.
\end{proof}

Notably, Proposition~\ref{prop:semi-explicit_representation} does not give a port-Hamiltonian representation of~\eqref{eq:desired_systems}, since the equivalence of the systems is only up to the unknown and undetermined initial value of $\zeta$. The alternative representation of~\eqref{eq:desired_systems} as port-Hamiltonian system arises when the variable $z_3$ is considered as a part of the resistive port-variables, which does not contribute to the dissipation, and which can not be eliminated from the system.

\begin{proposition}\label{prop:explicit_representation}
The dynamical system~\eqref{eq:desired_systems} is equivalent to a port-Hamil\-tonian with explicitly defined energy $H$. 
\end{proposition}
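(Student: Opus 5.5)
We keep the state space $U\subseteq\mathbb{R}^{n_1+n_2}$ and the Lagrangian submanifold $\mathcal{L}=\mathrm{graph}\,\nabla H$, so the energy is explicitly defined by $H$. The variable $z_3$ is moved into an enlarged resistive port. Concretely, the resistive space is taken as $X_r=\mathbb{R}^{n_1+n_2+n_3}$, and we extend the Dirac structure of Proposition~\ref{prop:es_pH} so that the third block row and column of $J$ and $B$ couple only to resistive and external ports, never to the state. The equation $0=\dots$ is then read as a constraint on a resistive flow, and $z_3$ as the corresponding resistive effort.

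The Dirac structure $\mathcal{D}$ is defined by the relation
\begin{align*}
\begin{pmatrix}
\begin{pmatrix}\alpha_1\\X_2\\ \phi\end{pmatrix}\\ f\\ y
\end{pmatrix} = \begin{bmatrix}
J & I & B\\
-I & 0 & 0\\
-B^\top & 0 & 0
\end{bmatrix}\begin{pmatrix}
\begin{pmatrix}X_1\\\alpha_2\\ \psi\end{pmatrix}\\ e\\ u
\end{pmatrix}.
\end{align*}
Here $(\phi,\psi)\in\mathbb{R}^{n_3}\oplus(\mathbb{R}^{n_3})^*$ is an additional port pair that is appended to the resistive variables. Skew-symmetry of the block matrix gives isotropy exactly as in the proof of Proposition~\ref{prop:es_pH}. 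Since the relation is a graph over $(X_1,\alpha_2,\psi,e,u)$, the dimension count also works, with a reordering of flows and efforts among the variables $(X_1,\alpha_1)$, $(X_2,\alpha_2)$, $(\phi,\psi)$.

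The resistive relation is $\mathcal{R}' := \set{\big((f,\phi),(e,\psi)\big)\,\vert\, e = Rf,\ \phi = 0}$, with $\psi$ free. Its dimension is $(n_1+n_2+n_3)+n_3$, the correct half dimension. It is isotropic, since $\langle e,f'\rangle-\langle e',f\rangle = f^\top Rf'-f'^\top Rf=0$ and the $(\phi,\psi)$-terms vanish because $\phi=0$. It is non-negative, since $\langle e,f\rangle+\langle\psi,\phi\rangle = f^\top Rf\ge 0$. The block $\set{0}\times(\mathbb{R}^{n_3})^*$ is the trivial Lagrangian subspace carrying no dissipation, which matches the ``trivial resistive port'' in the text.

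With these choices, the system
\begin{align*}
\big(\tfrac{\mathrm{d}}{\mathrm{d}t}z,(f,\phi),-y,\nabla H(z),(e,\psi),u\big)\in\mathcal{D},\quad ((f,\phi),(e,\psi))\in\mathcal{R}',\quad (z,\nabla H(z))\in\mathcal{L}
\end{align*}
reduces, after eliminating $f=-(\tfrac{\mathrm{d}}{\mathrm{d}t}z_1,\partial_{z_2}H,\psi)$ and $e=Rf$, to~\eqref{eq:desired_systems} with $z_3:=\psi$ and $\phi=0$.

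The step I expect to be delicate is the bookkeeping of how $(\phi,\psi)$ sits inside $\mathcal{D}$. It has to sit so that $\psi$ enters the third row of $(J-R)$ and of $B^\top$, which happens through the identity block. To make this work, the identity in the Dirac matrix must be of size $n_1+n_2+n_3$, with $e,f$ of that size, and $\psi$ must be placed in the ``input'' column with $\phi$ the matching output. Getting the sign right is essential: $\phi=0$ must reproduce $0=(J-R)(\dots)+B_3u$ and not its negative. Once the coordinates are fixed, both the Dirac check and the elimination are routine and follow the template of Proposition~\ref{prop:es_pH}.

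Conversely, every solution of~\eqref{eq:desired_systems} yields a solution of this port-Hamiltonian system by setting $\psi=z_3$, $\phi=0$, $f=-(\tfrac{\mathrm{d}}{\mathrm{d}t}z_1,\partial_{z_2}H,z_3)$ and $e=Rf$. Hence the two systems are equivalent, with energy given explicitly by $H$.
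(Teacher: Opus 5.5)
Your proposal is correct and is essentially the paper's own proof: your trivial resistive port $(\phi,\psi)$, with $\phi=0$ and $\psi$ free, is the paper's pair $(f_1,e_1)$, and your Dirac structure and $\mathcal{R}'$ coincide with \eqref{eq:explicite_Dirac} and the paper's $\mathcal{R}$ up to the ordering of the resistive variables. The sign issue you flag as delicate is moot, because with $\phi=0$ the constraints $0=v$ and $0=-v$ are the same equation.
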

\begin{proof}
Define $\mathcal{R}\leq\mathbb{R}^{n_3+(n_1+n_2)+n_3}\oplus\big(\mathbb{R}^{n_3+(n_1+n_2)+n_3}\big)^*$ by
\begin{align*}
\mathcal{R} := \set{(0,f_2,f_3,e_1,e_2,e_3)\,\left\vert\,\begin{pmatrix}
e_2\\e_3
\end{pmatrix} = R\begin{pmatrix}
f_2\\f_3
\end{pmatrix}\right.}.
\end{align*}
Clearly, $\dim\mathcal{R} = n_1+n_2+2n_3$, and since
\begin{align*}
\begin{pmatrix}
e_1\\e_2\\e_3
\end{pmatrix}^\top\begin{pmatrix}
0\\f_2'\\f_3'
\end{pmatrix}-\begin{pmatrix}
e_1'\\e_2'\\e_3'
\end{pmatrix}^\top\begin{pmatrix}
0\\f_2\\f_3
\end{pmatrix} = \begin{pmatrix}
f_2\\f_3
\end{pmatrix}^\top R\begin{pmatrix}
f_2'\\f_3'
\end{pmatrix} - \begin{pmatrix}
f_2'\\f_3'
\end{pmatrix}^\top R\begin{pmatrix}
f_2\\f_3
\end{pmatrix} = 0,
\end{align*}
$\mathcal{R}$ is recognised as a Lagrangian subspace. Consider further the Dirac structure $\mathcal{D}$ characterised by
\begin{equation}\label{eq:explicite_Dirac}
\begin{aligned}
\begin{pmatrix}
\begin{pmatrix}
\begin{pmatrix}\alpha_1\\X_2\end{pmatrix}\\f_1
\end{pmatrix}\\\begin{pmatrix}
f_2\\f_3
\end{pmatrix}\\y
\end{pmatrix} = \begin{bmatrix}
J & I & B\\
-I & 0 & 0\\
-B^\top & 0 & 0
\end{bmatrix}\begin{pmatrix}
\begin{pmatrix}
\begin{pmatrix}X_1\\\alpha_2\end{pmatrix}\\e_1
\end{pmatrix}\\\begin{pmatrix}
e_2\\e_3
\end{pmatrix}\\u
\end{pmatrix}.
\end{aligned}
\end{equation}
The port-Hamiltonian system
\begin{align*}
\big(\tfrac{\mathrm{d}}{\mathrm{d}t}z_1,\tfrac{\mathrm{d}}{\mathrm{d}t}z_2,f_1,f_2,f_3,-y,\partial_{z_1}H,\partial_{z_2}H,e_1,e_2,e_3,u\big) & \in\mathcal{D}\\
\big(f_1,f_2,f_3,e_1,e_2,e_3\big) & \in\mathcal{R}
\end{align*}
is, due to $f_1 = 0$, $-f_2 = \begin{pmatrix}X_1\\\alpha_2\end{pmatrix}$ and $f_3 = e_1$, then equivalent to
\begin{align*}
\begin{pmatrix}
\partial_{z_1}H\\
\tfrac{\mathrm{d}}{\mathrm{d}t}z_2\\0
\end{pmatrix} & = (J-R)\begin{pmatrix}
\tfrac{\mathrm{d}}{\mathrm{d}t}z_1\\\partial_{z_2}H\\e_1
\end{pmatrix} + Bu\\
y & = B^\top \begin{pmatrix}
\tfrac{\mathrm{d}}{\mathrm{d}t}z_1\\\partial_{z_2}H\\e_1
\end{pmatrix}
\end{align*}
and renaming $z_3 := e_1$ recovers~\eqref{eq:desired_systems}.
\end{proof}

The author would argue that this, i.e. the recognition of $z_3$ as a port-variable, is the true port-Hamiltonian formulation of~\eqref{eq:desired_systems}. In particular, this formulation points out the possibility of identifying some parts of the resistive port as constrained external port-variables, when we view $z_3$ not as part of the ``state'', but instead as part of the control.

In the representation presented in the proof of Proposition~\ref{prop:explicit_representation}, $z_3$ is, again, a co-vector. As discussed in Remark~\ref{rem:non_uniqueness}, this is a non-canonical choice, and $z_3$ can also be chosen as a vector or a combination of vectors and co-vectors. This distinction is hidden in~\eqref{eq:desired_systems} behind the identification of vectors and co-vectors with elements in the coordinate space.

\section{Structure-preserving interconnection}

Altmann and Schulze also consider a structure-preserving interconnection, which is composed from a power-preserving and a dissipative interconnection. From the port-Hamiltonian systems point of view, this is the three-fold interconnection of two systems~\eqref{eq:desired_systems} and a port-Hamiltonian system, which has only resistive and external ports. Taking the direct sum of the two systems~\eqref{eq:desired_systems} results, after reordering the variables, in a single system~\eqref{eq:desired_systems} and the interconnection becomes a two-fold interconnection. Surely, this interconnection preserves the port-Hamiltonian structure, cf.~\cite{Interconnection07}, but the specific ``graphical'' structure of the Dirac structure is not automatically preserved. To give a sufficient condition, we characterise the Dirac structures given in the proof of Proposition~\ref{prop:explicit_representation} abstractly.

\begin{lemma}\label{lem:abstract_Dirac}
Let $n = n_1+n_2$, and consider a Dirac structure
\begin{align*}
\mathcal{D}\leq\mathbb{R}^{n_1+n_2+n_3+n+m}\oplus\mathbb{R}^{n_1+n_2+n_3+n+m}
\end{align*}
with the properties
\begin{enumerate}[(i)]
\item $p_{1,7,8,9,10}\mathcal{D} = \mathbb{R}^{n_1}\oplus\mathbb{R}^{n_2}\oplus\mathbb{R}^{n_3}\oplus\mathbb{R}^n\oplus\mathbb{R}^m$
\item $p_{4,5}\big(\mathcal{D}\cap p_{1,6,7}^{-1}(\set{(0,0,0)})\big) = \set{(0,0)}$
\end{enumerate}
Given a symmetric and non-negative $R\in\mathbb{R}^{n\times n}$, the port-Hamiltonian system
\begin{align*}
\big(\tfrac{\mathrm{d}}{\mathrm{d}t}z_1,\tfrac{\mathrm{d}}{\mathrm{d}t}z_2,f_1,f_2,-y,\partial_{z_1} H,\partial_{z_2}H,e_1,e_2,u\big) & \in\mathcal{D},\\
f_1 & = 0,\\
e_2 & = Rf_2
\end{align*}
is equivalent to a system~\eqref{eq:desired_systems}.
\end{lemma}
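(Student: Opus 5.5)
Since $\dim\mathcal{D} = n_1+n_2+n_3+n+m$, the surjectivity in (i) makes $\mathcal{D}$ a graph over the coordinates $w := (X_1,\alpha_2,e_1,e_2,u)$ (indices $1,7,8,9,10$). So there is a unique linear map $M$ with
\begin{align*}
v := (\alpha_1,X_2,f_1,f_2,-y) = Mw \qquad\text{for all } (X,f,-y,\alpha,e,u)\in\mathcal{D}.
\end{align*}
This is the same "graphical" shape as~\eqref{eq:explicite_Dirac}. After reordering, the bilinear form $\llangle\cdot,\cdot\rrangle$ restricted to $\mathcal{D}$ becomes $w^\top v' + w'^\top v$. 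Isotropy of $\mathcal{D}$ is therefore equivalent to skew-symmetry of $M$, up to the sign convention for $-y$, which I fix as in the proof of Proposition~\ref{prop:es_pH}. I will partition $M$ according to $\big((X_1,\alpha_2,e_1), e_2, u\big)$ as
\begin{align*}
M = \begin{bmatrix} K & A & C\\ -A^\top & S & G\\ -C^\top & -G^\top & W\end{bmatrix},
\end{align*}
with $K,S,W$ antisymmetric. The rows correspond to $(\alpha_1,X_2,f_1)$, $f_2$ and $-y$ respectively.

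The second step uses (ii) to remove the blocks that do not appear in~\eqref{eq:desired_systems}. These are the dependence of $f_2$ on $e_2$ and $u$, and the feedthrough $W$ in the output; that is, I aim to show $S=0$, $G=0$ and $W=0$. The idea is to evaluate elements of $\mathcal{D}$ with $X_1=0$, $\alpha_1=0$, $\alpha_2=0$. Condition (ii) forces $f_2=0$ and $y=0$ for all such elements. Reading this through the graph representation, and then using skew-symmetry of $M$ to transfer the vanishing between block rows and columns, should yield the vanishing of these blocks.

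I expect this step to be the main obstacle. The constraint $\alpha_1 = 0$ is itself a linear condition on $(e_1,e_2,u)$, namely the first block row of $[K\ A\ C]$ evaluated at $X_1=\alpha_2=0$. So one has to check carefully that enough free directions in $(e_2,u)$ survive this constraint. My first attempt would be to show, via skew-symmetry, that the relevant parts of $A$ and $C$ in the $\alpha_1$-row annihilate the directions needed.

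With $S=G=W=0$, the remaining steps are substitutions. Put $\xi := (\tfrac{\mathrm{d}}{\mathrm{d}t}z_1,\partial_{z_2}H,e_1)$. Then $f_2 = -A^\top\xi$ and $y = C^\top \xi$, and $e_2 = Rf_2 = -RA^\top\xi$. The first block row gives
\begin{align*}
\begin{pmatrix}\partial_{z_1}H\\ \tfrac{\mathrm{d}}{\mathrm{d}t}z_2\\ f_1\end{pmatrix} = (K - ARA^\top)\xi + Cu.
\end{align*}
Imposing $f_1=0$ turns the last component into the algebraic constraint row $0$ of~\eqref{eq:desired_systems}. Finally I rename $z_3 := e_1$ and set $J := K$, $\widetilde{R} := ARA^\top$ and $B := C$, after fixing the sign of $B$ through the $-y$ convention. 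Here $J$ is antisymmetric by skew-symmetry of $M$, and $\widetilde{R}$ is symmetric and positive semi-definite as a congruence of $R\geq 0$. This yields exactly~\eqref{eq:desired_systems}, in the same way as in the proof of Proposition~\ref{prop:explicit_representation}. Conversely, every solution of~\eqref{eq:desired_systems} lifts by defining $f_1:=0$, $f_2:=-A^\top\xi$ and $e_2:=Rf_2$, which gives the equivalence.
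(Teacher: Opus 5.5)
Your route is the paper's route: (i) together with $\dim\mathcal{D}=n_1+n_2+n_3+n+m$ gives a skew-symmetric graph matrix $M$, (ii) is supposed to remove the $(e_2,u)\to(f_2,-y)$ block, and the rest is substitution. The gap is exactly the step you flagged, and it cannot be closed: under (ii) as stated, $S=G=W=0$ does not follow. Let $P=[A_1\ C_1]$ be the $\alpha_1$-rows of $A$ and $C$, and let $Q=\begin{bmatrix}S&G\\-G^\top&W\end{bmatrix}$. Setting $X_1=\alpha_2=e_1=0$, condition (ii) says only that $Q$ vanishes on $\ker P$ (for $n_3=0$ this is all of (ii)). Since $Q$ is skew, this means $Q$ is an arbitrary skew map on $\mathrm{im}\,P^\top$ extended by zero, which can be nonzero as soon as $\rk P\ge 2$.

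Concretely, take $n_1=2$, $n_2=n_3=0$, $m=1$, $K=0$, $S=0$, $W=0$, $A=\begin{bmatrix}1&0\\0&0\end{bmatrix}$, $C=\begin{bmatrix}0\\1\end{bmatrix}$, $G=\begin{bmatrix}1\\0\end{bmatrix}$. This means $\alpha_1=(e_{2,1},u)$, $f_2=(u-X_{1,1},0)$ and $y=X_{1,2}+e_{2,1}$. Here $X_1=0$, $\alpha_1=0$ force $e_{2,1}=u=0$, hence $f_2=0$ and $y=0$, so (i) and (ii) hold while $G\neq 0$. With $R=I_2$ the port-Hamiltonian system is
\begin{align*}
\partial_{z_1}H=\begin{bmatrix}-1&0\\0&0\end{bmatrix}\tfrac{\mathrm{d}}{\mathrm{d}t}z_1+\begin{pmatrix}1\\1\end{pmatrix}u,\qquad y=\tfrac{\mathrm{d}}{\mathrm{d}t}z_{1,2}-\tfrac{\mathrm{d}}{\mathrm{d}t}z_{1,1}+u .
\end{align*}
It has a feedthrough term, and its output row is not the transpose of its input column. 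So it is not of the form \eqref{eq:desired_systems} with $z_3:=e_1$ (here $n_3=0$, and such systems have no feedthrough). Your substitution instead yields $\partial_{z_1}H=\mathrm{diag}(-1,0)\tfrac{\mathrm{d}}{\mathrm{d}t}z_1+(0,1)^\top u$, $y=\tfrac{\mathrm{d}}{\mathrm{d}t}z_{1,2}$, which is a different system. Even a nonzero $S$ alone, with $G=W=0$, breaks your formulas: elimination then gives $K-AR(I-SR)^{-1}A^\top$ instead of $K-ARA^\top$. Its symmetric part is still $\le 0$, so in that case only your formulas fail, not the form.

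The paper's own proof simply asserts that ``assumption (ii) implies $J_1=0$, $C=0$ and $J_2=0$''. So you have reproduced its unsupported step rather than introduced a new one. The argument is sound if $\rk P\le 1$ (e.g.\ $n_1\le 1$). It is also sound if (ii) is replaced by the condition on the graph's independent coordinates, $p_{4,5}\big(\mathcal{D}\cap p_{1,7,8}^{-1}(\set{(0,0,0)})\big)=\set{(0,0)}$, i.e.\ $X_1=\alpha_2=e_1=0$. Then $(e_2,u)$ ranges freely, so $Se_2+Gu=0$ and $-G^\top e_2+Wu=0$ identically, giving $S=G=W=0$ at once. After that, your substitution and your lifting of solutions are correct.
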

\begin{proof}
By assumption (i) and since $\mathcal{D}$ is a Dirac structure, there is $J_{\mathrm{tot}}\in\mathbb{R}^{(n+n_3+n+m)\times(n+n_3+n+m)}$ with $J_{\mathrm{tot}}+J_{\mathrm{tot}}^\top = 0$ and
\begin{align*}
\mathcal{D} = \set{(X_1,X_2,f_1,f_2,y,\alpha_1,\alpha_2,e_1,e_2,u)\,\left\vert\,\begin{pmatrix}
\alpha_1\\X_2\\f_1\\f_2\\y
\end{pmatrix} = J_{\mathrm{tot}}\begin{pmatrix}
X_1\\\alpha_2\\e_1\\e_2\\u
\end{pmatrix}\right.}.
\end{align*}
Write $J_{\mathrm{tot}}$ as a block-matrix
\begin{align*}
J_{\mathrm{tot}} = \begin{bmatrix}
J & A & B\\
-A^\top & J_1 & C\\
-B^\top & -C^\top & J_2
\end{bmatrix}
\end{align*}
with $J\in\mathbb{R}^{(n+n_3)\times (n+n_3)}$, $J_1\in\mathbb{R}^{n\times n}$ and $J_2\in\mathbb{R}^{m\times m}$. Then, assumption (ii) implies $J_1 = 0$, $C = 0$ and $J_2 = 0$. Plugging in the port-Hamiltonian system gives therefore the equation
\begin{align*}
\begin{pmatrix}
\partial_{z_1}H\\
\tfrac{\mathrm{d}}{\mathrm{d}t}z_2\\
0
\end{pmatrix} & = (J-ARA^\top)\begin{pmatrix}
\tfrac{\mathrm{d}}{\mathrm{d}t}z_1\\
\partial_{z_2}H\\
e_1
\end{pmatrix} + Bu\\
y & = B^\top \begin{pmatrix}
\tfrac{\mathrm{d}}{\mathrm{d}t}z_1\\
\partial_{z_2}H\\
e_1
\end{pmatrix}
\end{align*}
which has indeed the structure~\eqref{eq:desired_systems}.
\end{proof} 

We may now give a sufficient condition for the interconnection to be structure preserving.

\begin{proposition}
Let $\mathcal{D}_{\mathrm{int}}\leq\mathbb{R}^{m+p_r+p}\oplus\mathbb{R}^{m+p_r+p}$ be a Dirac structure, and consider the interconnected system
\begin{align*}
\begin{pmatrix}
\partial_{z_1}H(z_1,z_2)\\
\tfrac{\mathrm{d}}{\mathrm{d}t}z_2\\
0
\end{pmatrix} & = (J-R)\begin{pmatrix}
\tfrac{\mathrm{d}}{\mathrm{d}t}z_1\\
\partial_{z_2}H(z_1,z_2)\\
z_3
\end{pmatrix} + Bu\\
y & = B^\top\begin{pmatrix}
\tfrac{\mathrm{d}}{\mathrm{d}t}z_1\\
\partial_{z_2}H(z_1,z_2)\\
z_3
\end{pmatrix}\\
(y,f,-\widetilde{y},u,e,\widetilde{u}) & \in\mathcal{D}_{\mathrm{int}}\\
e & = R_{\mathrm{int}}f
\end{align*}
If, for all $(X,\alpha,e_1)\in\mathbb{R}^{n_1+n_2+n_3}$ and $(e,\widetilde{u})\in\mathbb{R}^{p_r+p}$:
\begin{enumerate}[(a)]
\item there exists $u\in\mathbb{R}^m$ with
\begin{align*}
\left(B^\top\begin{pmatrix}
X\\\alpha\\e_1
\end{pmatrix},u,e,\widetilde{u}\right)\in p_{1,4,5,6}\mathcal{D}_{\mathrm{int}},
\end{align*}
\item whenever
\begin{align*}
\left(B^\top\begin{pmatrix}
X\\\alpha\\e_1
\end{pmatrix},f,\widetilde{y},u,e,\widetilde{u}\right),\left(B^\top\begin{pmatrix}
X\\\alpha\\e_1
\end{pmatrix},f',\widetilde{y}',u,e,\widetilde{u}\right)\in\mathcal{D}_{\mathrm{int}},
\end{align*}
$f = f'$ and $\widetilde{y} = \widetilde{y}'$,
\item there exists $(f,\widetilde{y},u)\in\mathbb{R}^{p_r+p+m}$ with
\begin{align*}
\begin{pmatrix}
0,f,\widetilde{y},u,e,\widetilde{u}
\end{pmatrix}\in \mathcal{D}_{\mathrm{int}},
\end{align*}
\end{enumerate}
then the interconnected system is equivalent to a system~\eqref{eq:desired_systems}.
\end{proposition}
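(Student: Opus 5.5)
We plan to reduce the statement to Lemma~\ref{lem:abstract_Dirac}. By Proposition~\ref{prop:explicit_representation}, the first system is the port-Hamiltonian system with Dirac structure $\mathcal{D}$ from~\eqref{eq:explicite_Dirac} and resistive structure $\mathcal{R}$. The interconnection with $\mathcal{D}_{\mathrm{int}}$ through $(u,y)$ is then the standard composition of Dirac structures, cf.~\cite{Interconnection07}. We therefore consider
\begin{align*}
\mathcal{D}_{\mathrm{comp}} := \set{(X_1,X_2,f_1,(f_2,f),\widetilde{y},\alpha_1,\alpha_2,e_1,(e_2,e),\widetilde{u})\,\big\vert\,\exists\,(u,y):\ (\ldots,-y,\ldots,u)\in\mathcal{D},\ (y,f,-\widetilde{y},u,e,\widetilde{u})\in\mathcal{D}_{\mathrm{int}}}.
\end{align*}
This is a Dirac structure on the space with resistive block $\mathbb{R}^{n+p_r}$ and external block $\mathbb{R}^p$. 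The resistive relation is $f_1=0$ together with $(e_2,e)=\mathrm{diag}(R,R_{\mathrm{int}})(f_2,f)$, which is again symmetric and non-negative. It then suffices to check hypotheses (i) and (ii) of Lemma~\ref{lem:abstract_Dirac} for $\mathcal{D}_{\mathrm{comp}}$, with $n$ replaced by $n+p_r$ and $m$ by $p$. Strictly, $R$ sits in the $(f_2,f_3)$-coordinates of $\mathcal{R}$, but $f_3=e_1$ is absorbed exactly as in the proof of Proposition~\ref{prop:explicit_representation}.

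\textbf{Step 1: hypothesis (i).} Fix arbitrary free data $(X_1,\alpha_2,e_1,e_2,e,\widetilde{u})$. From~\eqref{eq:explicite_Dirac}, $y=B^\top(X_1,\alpha_2,e_1)$ up to the term involving $e_2$. More precisely, $y=-B^\top(\cdot)$ in the sign convention with $-y$, and the sign is handled by the substitution used in the paper. The output $y$ is determined by the state-type free variables alone, since the last block row of $J_{\mathrm{tot}}$ in~\eqref{eq:explicite_Dirac} has zero entries in the $e_2$ and $u$ columns. Condition (a) then gives some $u$ for which $(y,u,e,\widetilde{u})\in p_{1,4,5,6}\mathcal{D}_{\mathrm{int}}$. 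The remaining components $\alpha_1,X_2,f_1,f_2$ are determined by~\eqref{eq:explicite_Dirac} from $(X_1,\alpha_2,e_1,e_2,u)$. Hence the projection onto the free coordinates is surjective.

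\textbf{Step 2: hypothesis (ii).} Take an element of $\mathcal{D}_{\mathrm{comp}}$ with $X_1=0$, $\alpha_2=0$ and $e_1=0$. Then $y=0$. We must show that the resistive flows $(f_2,f)$ and the outputs $\widetilde{y}$ vanish. First, $f_2=-(X_1,\alpha_2)=0$ directly from~\eqref{eq:explicite_Dirac}. Second, suppose $e$ and $\widetilde{u}$ also vanish. Condition (c) applied to $e=0$, $\widetilde{u}=0$ produces an element $(0,f',\widetilde{y}',u',0,0)\in\mathcal{D}_{\mathrm{int}}$. The zero element $(0,0,0,0,0,0)$ is also in $\mathcal{D}_{\mathrm{int}}$. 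Conditions (b) and (c) together should then force $f=0$ and $\widetilde{y}=0$ for every $u$ occurring with $y=0$. Subtracting two elements with the same $(y,e,\widetilde{u})$ and using linearity combined with (b) is the intended mechanism.

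\textbf{Main obstacle.} The delicate point is that (b) fixes $u$ as well, whereas (ii) concerns all $u$. I would handle this using isotropy of $\mathcal{D}_{\mathrm{int}}$. Consider an element $(0,f,-\widetilde{y},u,0,0)\in\mathcal{D}_{\mathrm{int}}$ and pair it with the elements furnished by (a) for arbitrary $(y,e,\widetilde{u})$. Isotropy gives $\langle u',0\rangle+\langle e',f\rangle-\langle\widetilde{u}',\widetilde{y}\rangle=0$. Since $e'$ and $\widetilde{u}'$ range over all values by (a), this forces $f=0$ and $\widetilde{y}=0$.

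With (i) and (ii) verified, Lemma~\ref{lem:abstract_Dirac} yields the form~\eqref{eq:desired_systems} with skew part $J$ and dissipation $A\,\mathrm{diag}(R,R_{\mathrm{int}})A^\top\geq0$. This proves the claim.
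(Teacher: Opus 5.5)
\textbf{Gap in Step 2.} Your route is the paper's: form the composed Dirac structure, get hypothesis (i) of Lemma~\ref{lem:abstract_Dirac} from (a), and get (ii) from (b) and (c). Step~1 is fine. The gap is in Step~2.

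Hypothesis (ii), read as you do, only imposes $X_1=0$, $\alpha_2=0$, $e_1=0$. This forces $y=0$, but $e_2$, the internal $u$ and the new efforts $(e,\widetilde u)$ stay free. So you must show that \emph{every} $(0,f,-\widetilde y,u,e,\widetilde u)\in\mathcal{D}_{\mathrm{int}}$, with $(e,\widetilde u)$ arbitrary, has $f=0$ and $\widetilde y=0$. This is exactly what makes the blocks $J_1$, $C$, $J_2$ of the composed structure vanish.

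You instead assume $e=0$, $\widetilde u=0$. That case already follows from Step~1, since the composed structure is then a graph over $(X_1,\alpha_2,e_1,e_2,e,\widetilde u)$. The real obstacle is the free efforts $(e,\widetilde u)$, not the internal $u$, and your isotropy argument does not reach them. Pairing $(0,f,-\widetilde y,u,e,\widetilde u)$ with an element $(0,f'',-\widetilde y'',u',e',\widetilde u')$ supplied by (a) gives
\[
\langle e',f\rangle-\langle\widetilde u',\widetilde y\rangle=-\langle e,f''\rangle+\langle\widetilde u,\widetilde y''\rangle .
\]
This only says that the induced map $(e,\widetilde u)\mapsto(f,\widetilde y)$ is skew-symmetric, not that it is zero. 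Your displayed pairing also drops the term $\langle u,y'\rangle$, so it is correct only for $y'=0$.

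\textbf{The step cannot be completed from (a)--(c).} Taking $(X,\alpha,e_1)=0$ in (a) gives (c), and (c) implies (b) by isotropy, so (a)--(c) say no more than (a). Take $m=0$, $p_r=0$, $p=2$ and $\mathcal{D}_{\mathrm{int}}=\set{(-\widetilde y,\widetilde u)\,\vert\,\widetilde y=K\widetilde u}$ with $K=\begin{bmatrix}0&1\\-1&0\end{bmatrix}$, the co-graph of an antisymmetric map. Then (a)--(c) hold, yet the composed structure contains the feedthrough $\widetilde y=K\widetilde u$. So $J_2\neq0$, (ii) fails, and the lemma does not apply. In this example the proposition's conclusion can still be reached by absorbing the skew feedthrough into extra algebraic variables, but not through the lemma.

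\textbf{What (ii) actually requires.} An annihilator computation shows that (ii) for the composed structure is equivalent to a stronger condition: for every $(e,\widetilde u)$ there is $u$ with $(0,0,0,u,e,\widetilde u)\in\mathcal{D}_{\mathrm{int}}$. This is (c) with $f=0$ and $\widetilde y=0$ demanded. Under that hypothesis your isotropy idea works verbatim: pairing with these elements gives $\langle e',f\rangle-\langle\widetilde u',\widetilde y\rangle=0$ for all $(e',\widetilde u')$.

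The paper's proof merely asserts that (b) and (c) imply (ii), so it has the same hole. As written, your proposal does not prove the statement.
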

\begin{proof}
Let $\mathcal{D}$ denote the Dirac structure characterised by~\eqref{eq:explicite_Dirac}. The Dirac structure $\overline{\mathcal{D}}$ describing the interconnected system is characterised by the relation $(X_1,X_2,f_1,f_2,f_3,\widetilde{y},\alpha_1,\alpha_2,e_1,e_2,e_3,\widetilde{u})\in \overline{\mathcal{D}}$ if, and only if,
\begin{align*}
(X_1,X_2,f_1,f_2,-y,\alpha_1,\alpha_2,e_1,e_2,u)& \in\mathcal{D}\\
(y,f_3,\widetilde{y},u,e_3,\widetilde{u} & \in\mathcal{D}_{\mathrm{int}}
\end{align*}
for some $y,u\in\mathbb{R}^m$. Then, (a) implies (i) of Lemma~\ref{lem:abstract_Dirac}, and (b) and (c) imply (ii) of Lemma~\ref{lem:abstract_Dirac}.
\end{proof}

\begin{example}
The change of the dimension of the external port is a structure-preserving interconnection: Given a system~\eqref{eq:desired_systems}, then the interconnection constraint
\begin{align*}
u = D\widetilde{u},\qquad \widetilde{y} = D^\top y
\end{align*}
for some $D\in\mathbb{R}^{m\times m'}$ corresponding to the interconnection Dirac structure
\begin{align*}
\mathcal{D}_{\mathrm{int}} = \set{(u,\widetilde{u},y,\widetilde{y})\,\left\vert\,\begin{pmatrix}
u\\\widetilde{y}
\end{pmatrix} =  \begin{bmatrix}
0 & D\\
-D^\top & 0
\end{bmatrix}\begin{pmatrix}
y\\\widetilde{u}
\end{pmatrix}\right.}
\end{align*}
gives the system
\begin{align*}
\begin{pmatrix}
\partial_{z_1}H(z_1,z_2)\\
\tfrac{\mathrm{d}}{\mathrm{d}t}z_2\\0
\end{pmatrix} & = (J-R)\begin{pmatrix}
\tfrac{\mathrm{d}}{\mathrm{d}t}z_1\\\partial_{z_2}H(z_1,z_2)\\z_3
\end{pmatrix} + BD\widetilde{u}\\
\widetilde{y} & = D^\top B^\top \begin{pmatrix}
\tfrac{\mathrm{d}}{\mathrm{d}t}z_1\\\partial_{z_2}H(z_1,z_2)\\z_3
\end{pmatrix}
\end{align*}
\end{example}

\section{Adding the feedthrough term}

Notably, we have omitted feedthrough, so that the system class does not quite encompasses the es-ph systems. It is, however, straightforward to consider systems with feedthrough as
\begin{align}\label{eq:feedthrough}
\begin{pmatrix}
\partial_{z_1}H\\
\tfrac{\mathrm{d}}{\mathrm{d}t}z_2\\
0\\
-y
\end{pmatrix} = (J-R) \begin{pmatrix}
\tfrac{\mathrm{d}}{\mathrm{d}t}z_1\\
\partial_{z_2}H\\
z_3\\
u
\end{pmatrix}
\end{align}
where $J,R\in\mathbb{R}^{(n_1+n_2+n_3+m)\times(n_1+n_2+n_3+m)}$ with $J + J^\top = R - R^\top = 0$ and $R\geq 0$. In case $n_3 = 0$, the systems~\eqref{eq:feedthrough} coincide with the energy-stable port-Hamiltonian system of~\cite{BuchGlasZwar26}, so that the system class~\eqref{eq:feedthrough} are a true generalisation of the latter. It is readily demonstrated that the systems~\eqref{eq:feedthrough} admit a similar double representation as port-Hamiltonian systems as the feedthrough-free systems~\eqref{eq:desired_systems}.

\begin{proposition}
The system~\eqref{eq:feedthrough} may be represented as 
\begin{enumerate}[(i)]
\item port-Hamiltonian system with explicitly defined energy and
\item port-Hamiltonian system with implicitly defined energy.
\end{enumerate}
\end{proposition}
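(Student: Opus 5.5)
The plan is to mimic the proofs of Proposition~\ref{prop:implicit_representation} and Proposition~\ref{prop:explicit_representation}, now placing the external port $(u,-y)$ inside the block on which $J-R$ acts. The resistive part will then also act on the port variables, which produces the feedthrough.

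For (ii), I use the same extended space $\overline{U}=U\times V$ and the same Lagrangian submanifold $\mathcal{L}=\set{(z,a_0,\nabla H(z),z_3)}$ as in Proposition~\ref{prop:implicit_representation}. The Dirac structure $\mathcal{D}$ is defined as the graph of the skew-symmetric matrix
\begin{align*}
\begin{bmatrix}
J & I\\
-I & 0
\end{bmatrix},
\end{align*}
which maps $(X_1,\alpha_2,\alpha_3,u,e)$ to $(\alpha_1,X_2,X_3,-y,f)$. Here $J$ acts on the full $(n_1+n_2+n_3+m)$-vector and the identity couples it to a resistive port $(f,e)\in\mathbb{R}^{n_1+n_2+n_3+m}\oplus(\mathbb{R}^{n_1+n_2+n_3+m})^*$. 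The fourth block of $\mathcal{D}$ places $u$ among the efforts and $-y$ among the flows. Isotropy follows as in Proposition~\ref{prop:es_pH} from the skew-symmetry of this block matrix. The dimension count $\dim\mathcal{D}=n_1+n_2+n_3+m+(n_1+n_2+n_3+m)$ holds because $\mathcal{D}$ is a graph over the effort-type coordinates. With $\mathcal{R}=\set{(f,e)\mid e=Rf}$, the relations $f=-(\dot z_1,\partial_{z_2}H,z_3,u)$ and $e=Rf$ give exactly~\eqref{eq:feedthrough}, together with $\dot a=0$, which is removed as before.

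For (i), I follow Proposition~\ref{prop:explicit_representation}. I take $\mathcal{R}\leq\mathbb{R}^{n_3+N}\oplus(\mathbb{R}^{n_3+N})^*$ with $N=n_1+n_2+n_3+m$, given by $\set{(0,f_2,e_1,e_2)\mid e_2=Rf_2}$. Here $f_2$ collects the four-block vector $(f_{2}',f_3,f_u)$, and only the $n_3$-slot corresponding to $z_3$ is forced to have zero flow with free effort $e_1$. The Lagrangian property is verified by the same computation as before, using the symmetry of $R$ and $\dim\mathcal{R}=n_3+N$. The Dirac structure is as in~\eqref{eq:explicite_Dirac}, but with $J$ now of size $N$ and with $u$, $-y$ entering as the fourth block of the argument and of the image respectively; there is no separate $B$ column, since $B$ is absorbed into $J$. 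The constraint $f_1=0$ then forces the third row of $J-R$ to vanish, $f_3$ equals $e_1$, and renaming $z_3:=e_1$ recovers~\eqref{eq:feedthrough}.

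The main obstacle is the bookkeeping of the external port. In~\eqref{eq:feedthrough}, $u$ and $-y$ appear inside the $(J-R)$-block, so the external port is partly routed through the resistive relation. I must check that taking $(-y,u)$ as a flow/effort pair of $\mathcal{D}$ yields the correct signs. To handle this, I would first introduce $y$ only through $f_u=-u$, so that the fourth row reads $-y=(J-R)_{4\cdot}(\ldots)$. I would then confirm that the power balance $\langle u,y\rangle$ has the right sign, so that the sign convention of~\eqref{eq:pH_orig} is respected. If it is not, I would replace $u$ by $-u$ via a splitting-preserving automorphism as in Remark~\ref{rem:non_uniqueness}.
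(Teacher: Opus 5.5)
Your proposal is correct and follows the paper's proof essentially step for step. For (i) you use the same Dirac structure~\eqref{eq:feedthrough_Dirac} with the resistive space $\set{(0,f_2,e_1,e_2)\mid e_2=Rf_2}$, and for (ii) the same graph Dirac structure with full resistive relation $e=Rf$ on the extended state space with the constant auxiliary state $a=a_0$. Your $\mathcal{L}=\set{(z,a_0,\nabla H(z),z_3)}$ with free $z_3$ is the right choice: the paper's displayed $\mathcal{L}$, with last component $0$, appears to be a slip, since that set has dimension only $n_1+n_2$ and would force $z_3=0$. Your sign worry is unfounded, because putting $-y$ in the port-flow slot already gives $-y=(J-R)_{4\cdot}(\cdots)$ as required by~\eqref{eq:pH_orig}, so no $u\mapsto -u$ fallback is needed.
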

\begin{proof}
Consider the Dirac structure $\mathcal{D}$ characterised by
\begin{align}\label{eq:feedthrough_Dirac}
\begin{pmatrix}
\alpha_1\\
X_2\\
f_1\\
y\\
f_2
\end{pmatrix} = \begin{bmatrix}
J & I\\
-I & 0
\end{bmatrix}\begin{pmatrix}
X_1\\\alpha_2\\e_1\\u\\e_2
\end{pmatrix}
\end{align}
and the non-negative Lagrangian subspace
\begin{align*}
\mathcal{R} := \set{(0,f_2,e_1,e_2)\,\big\vert\,e_1\in\mathbb{R}^{n_3}, e_2 = Rf_2}.
\end{align*}
In the port-Hamiltonian system
\begin{align*}
\begin{pmatrix}
\tfrac{\mathrm{d}}{\mathrm{d}t}z_1,\tfrac{\mathrm{d}}{\mathrm{d}t}z_2,f_1,f_2,-y,\partial_{z_1}H,\partial_{z_2}H,e_1,e_2,u\end{pmatrix} & \in\mathcal{D}\\
\begin{pmatrix}
f_1,f_2,e_1,e_2
\end{pmatrix} & \in\mathcal{R},
\end{align*}
the latent variables $f_2$ and $e_2$ may be eliminated and $f_1 = 0$ by definition of $\mathcal{R}$. This leads to
\begin{align*}
\begin{pmatrix}
\partial_{z_1}H\\
\tfrac{\mathrm{d}}{\mathrm{d}t}z_2\\
0\\
-y
\end{pmatrix} = (J-R) \begin{pmatrix}
\tfrac{\mathrm{d}}{\mathrm{d}t}z_1\\
\partial_{z_2}H\\
e_1\\
u
\end{pmatrix}
\end{align*}
and renaming $z_3 := e_1$ gives~\eqref{eq:feedthrough}. Choose further an open $V\subseteq\mathbb{R}^{n_3}$ and $a_0\in V$, and consider the Lagrangian submanifold
\begin{align*}
\mathcal{L} := \set{(z,a_0,\nabla_z H(z),0)\,\big\vert\,z\in U},
\end{align*}
the non-negative Lagrangian subspace $\mathcal{R}' := \set{(f,e)\,\big\vert\,e = Rf}$ and the Dirac structure $\mathcal{D}'$ characterised by~\eqref{eq:feedthrough_Dirac} with $e_1$ and $f_1$ interchanged. Then, the port-Hamiltonian system
\begin{align*}
\begin{pmatrix}
\tfrac{\mathrm{d}}{\mathrm{d}t}z_1,\tfrac{\mathrm{d}}{\mathrm{d}t}z_2,a,f_r,-y,e^1,e^2,e^3,e_r,u\end{pmatrix} & \in\mathcal{D}'\\
\begin{pmatrix}
f_r,e_r
\end{pmatrix} & \in\mathcal{R}',\\
\begin{pmatrix}
(z_1,z_2,a,e^1,e^2,e^3
\end{pmatrix} & \in\mathcal{L}
\end{align*}
is, by elimination of the latent variables $f_r$ and $e_r$, equivalent to
\begin{align*}
\begin{pmatrix}
\partial_{z_1}H\\
\tfrac{\mathrm{d}}{\mathrm{d}t}z_2\\
0\\
-y
\end{pmatrix} & = (J-R) \begin{pmatrix}
\tfrac{\mathrm{d}}{\mathrm{d}t}z_1\\
\partial_{z_2}H\\
e^3\\
u
\end{pmatrix}, \qquad a = a_0.
\end{align*}
Elimination of the trivial equation $a = a_0$ and renaming $z_3 := e^3$ gives~\eqref{eq:feedthrough}.
\end{proof}

\section{Conclusion}

We have proposed port-Hamiltonian representations for the systems~\eqref{eq:desired_systems} recognising the modelling approach of~\cite{AltmSchu25} as a subclass of port-Hamiltonian systems theory. The representations are not unique. We have recalled the three sources of ambiguity in the geometric representation of the coordinate representation for classical and energy-stable port-Hamiltonian systems. For the systems~\eqref{eq:desired_systems}, a fourth ambiguity arises, which can not be removed by simple argumentation. Analogous to~\cite{BuchGlasZwar26}, we extended the system class within port-Hamiltonian systems by adding feedthrough.

\bibliographystyle{plain}

\end{document}